\newtheorem{theorem}{Theorem}[section]
\theoremstyle{definition}
\newtheorem{definition}{Definition}[section]
\title{\textbf{A note on the RAGE Theorem and phase-averaged dispersion for the Fibonacci Hamiltonian}}
\author{Gaétan Leclerc}
\begin{document}

\maketitle

\begin{abstract}
We find a weaker condition on spectral measures, \say{eventual absolute continuity}, that ensure quantum delocalization as in the RAGE Theorem in the case of purely absolutely continuous spectrum. We then adapt these idea to strongly improve some phase-averaged delocalization bounds for the Fibonacci quasicrystal. 
\end{abstract}

\section{On the RAGE Theorem}

In these notes, we discuss the spectral Theory of bounded selfadjoints operators, and in particular we discuss the Fibonacci Hamiltonian. A complete source on this topic is given by the books \cite{DF22,DF24}. For an introduction, on can consult the survey \cite{Da17}. \\

Let $\mathcal{H}$ be a Hilbert space. Let $H : \mathcal{H} \rightarrow \mathcal{H}$ be a Hamiltonian: to simplify, suppose that $H$ is bounded and selfadjoint. For $\psi \in \mathcal{H}$, we denote by $\mu_\psi$ the spectral measure associated with $H$ and $\psi$. This finite measure is supported on the spectrum $\sigma(H) \subset \mathbb{R}$ of $H$ (we will denote $\mu_\psi \in \mathcal{P}(\sigma(H))$). We write $\psi \in \mathcal{H}_{ac}$ if $\mu_\psi $ is absolutely continuous with respect to the Lebesgue measure ($\mu_\psi \ll \lambda$). The set $\mathcal{H}_{ac}$ is a closed linear subspace of $\mathcal{H}$. \\

The famous RAGE Theorem, in the (easy) absolutely continuous case, state that:

\begin{theorem}
Let $\psi \in \mathcal{H}_{ac}$. Then for every $\varphi \in \mathcal{H}$, $$ \langle e^{it H} \psi , \varphi \rangle \underset{t \rightarrow \infty}{\longrightarrow} 0.$$
\end{theorem}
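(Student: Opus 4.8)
The plan is to reduce the statement, via the spectral calculus, to the Riemann--Lebesgue lemma. First I would recall the polarized form of the spectral theorem: for fixed $\psi,\varphi \in \mathcal{H}$ there is a complex Borel measure $\mu_{\psi,\varphi}$ on $\sigma(H)$ of finite total variation such that $\langle g(H)\psi,\varphi\rangle = \int_{\sigma(H)} g\, d\mu_{\psi,\varphi}$ for every bounded Borel function $g$, and $\mu_{\psi,\psi} = \mu_\psi$. Taking $g(x) = e^{itx}$ gives
\[
\langle e^{itH}\psi,\varphi\rangle = \int_{\mathbb{R}} e^{itx}\, d\mu_{\psi,\varphi}(x),
\]
so the conclusion is precisely that the Fourier transform of the finite complex measure $\mu_{\psi,\varphi}$ vanishes at infinity.

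The key step is to show that $\mu_{\psi,\varphi} \ll \lambda$ as soon as $\psi \in \mathcal{H}_{ac}$. Denoting by $P_A$ the spectral projection of $H$ associated with a Borel set $A$, one has $\mu_{\psi,\varphi}(A) = \langle P_A\psi,\varphi\rangle = \langle P_A\psi, P_A\varphi\rangle$, whence by Cauchy--Schwarz $|\mu_{\psi,\varphi}(A)|^2 \le \langle P_A\psi,\psi\rangle\,\langle P_A\varphi,\varphi\rangle = \mu_\psi(A)\,\mu_\varphi(A)$. In particular $\mu_\psi(A)=0$ forces $\mu_{\psi,\varphi}(A)=0$; applying this to every Borel subset of a $\lambda$-null set yields $|\mu_{\psi,\varphi}| \ll \lambda$. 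Since $\psi \in \mathcal{H}_{ac}$ means $\mu_\psi \ll \lambda$, we get $\mu_{\psi,\varphi} \ll \lambda$, and Radon--Nikodym provides $h \in L^1(\mathbb{R},\lambda)$ with $d\mu_{\psi,\varphi} = h\, d\lambda$. It then suffices to invoke the Riemann--Lebesgue lemma, $\int_{\mathbb{R}} e^{itx} h(x)\, dx \to 0$ as $|t|\to\infty$, which closes the argument.

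I do not expect a genuine obstacle, the statement being classical; the point requiring the most care is the transfer of absolute continuity to the off-diagonal measure $\mu_{\psi,\varphi}$ — note in particular that $\varphi$ itself need not lie in $\mathcal{H}_{ac}$, so one really must use the Cauchy--Schwarz bound above rather than any polarization trick reducing to the diagonal. As an alternative avoiding complex measures, one may observe that the set of $\varphi$ for which $\langle e^{itH}\psi,\varphi\rangle \to 0$ is a closed subspace of $\mathcal{H}$ (the bound $|\langle e^{itH}\psi,\varphi\rangle| \le \|\psi\|\,\|\varphi\|$ is uniform in $t$); it contains the orthogonal complement of the cyclic subspace $Z_\psi = \overline{\{g(H)\psi : g \text{ bounded Borel}\}}$ trivially, since that subspace is $e^{itH}$-invariant, and it contains each $g(H)\psi$ because $\langle e^{itH}\psi,g(H)\psi\rangle = \int_{\mathbb{R}} e^{itx}\overline{g(x)}\,d\mu_\psi(x) = \int_{\mathbb{R}} e^{itx}\overline{g(x)} f(x)\, dx$ with $f = d\mu_\psi/d\lambda \in L^1$, again by Riemann--Lebesgue; as these vectors span a dense subspace of $Z_\psi$, the closed subspace in question is all of $\mathcal{H}$.
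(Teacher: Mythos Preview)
Your proposal is correct and follows essentially the same route as the paper: introduce the off-diagonal spectral measure $\mu_{\psi,\varphi}$, show it is absolutely continuous with respect to Lebesgue because $\mu_\psi$ is, and conclude via Riemann--Lebesgue. The only cosmetic difference is that the paper deduces $\mu_{\psi,\varphi}(E)=0$ for $|E|=0$ directly from $\mathbb{1}_E(H)\psi=0$ (i.e.\ Cauchy--Schwarz against $\varphi$ itself), whereas you use the slightly sharper bound $|\mu_{\psi,\varphi}(A)|^2 \le \mu_\psi(A)\,\mu_\varphi(A)$; either version does the job.
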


\begin{proof}
Let $\psi \in \mathcal{H}_{ac}$. Notice that for any Borel set $E$ with zero lebesgue measure $|E|=0$, we have vanishing of the spectral projector:
$$ \|\mathbb{1}_E(H) \psi \|_{\mathcal{H}}^2 = \langle \mathbb{1}_E^2(H) \psi,\psi\rangle_{\mathcal{H}} = \int_{\sigma(H)} \mathbb{1}_E(t) d\mu_\psi(t) = \mu_\psi(E)=0 $$
by absolute continuity of $\mu_\psi$. Now, consider the linear form: 
$$ f \in C^0(\sigma(H),\mathbb{C}) \longmapsto \langle f(H) \psi,\varphi \rangle_{\mathcal{H}} \in \mathbb{C}.$$
The linear form is continuous for the topology of $L^\infty(\sigma(H),\mathbb{C})$, since $\|f(H)\|_{\mathcal{H} \rightarrow \mathcal{H}} \leq \|f\|_{L^\infty(\sigma(H),\mathbb{C})}$ by the usual properties of the functionnal calculus. By Riesz representation theorem, there exists a complex measure $\mu_{\psi,\varphi}$ such that $ \langle f(H) \psi,\varphi \rangle_{\mathcal{H}} = \int f d\mu_{\varphi,\psi}$. But notice that for any borel set $E$ with $|E|=0$, we have
$$ \mu_{\varphi,\psi}(E) = \langle \mathbb{1}_E(H) \psi,\varphi \rangle_{\mathcal{H}} = 0. $$
It follows that $\mu_{\varphi,\psi}$ is absolutely continuous with respect to the lebesgue measure. We then find
$$ \langle e^{it H} \psi,\varphi \rangle = \widehat{\mu}_{\psi,\varphi}(t) \rightarrow 0, $$
by the Riemann-Lebesgue Lemma.
\end{proof}

The condition that $\mu_\psi$ is absolutely continuous can actually be relaxed. For any $N \geq 1$, we define another Hamiltonian $H_k = I \otimes \dots \otimes H \otimes \dots \otimes I$ acting on $\mathcal{H}^{\otimes N}$, and we then define $\widehat{H} := \sum_k H_k$. One can then look at the spectral measure of $\psi^{\otimes N}$ associated to $\widehat{H}$, denoted $\mu_{\psi^{\otimes N}}$. Recall that we have the formula: $$ \mu_{\psi^{\otimes N}} = \mu_{\psi}^{* N} ,$$ where $*$ denotes a convolution. In fact, in general, we have $ \mu_{\psi \otimes \varphi} = \mu_\psi * \mu_\psi $. This is because:
$$ \widehat{\mu_{\psi \otimes \varphi}}(t) = \langle e^{i t (I \otimes H + H \otimes I)} \psi \otimes \varphi, \psi \otimes \varphi\rangle_{\mathcal{H} \otimes \mathcal{H}} =\langle  (e^{it H}\psi) \otimes (e^{it H}\varphi), \psi \otimes \varphi\rangle_{\mathcal{H} \otimes \mathcal{H}} $$ $$ = \langle e^{it H}\psi , \psi \rangle_\mathcal{H} \cdot \langle  e^{it H}\varphi , \varphi \rangle_{\mathcal{H}} = \widehat{\mu_\psi}(t) \widehat{\mu_\varphi}(t) = \widehat{\mu_\psi * \mu_\varphi }(t) .$$

\begin{definition}
Let us define the eventually absolutely continuous subspace as:
$$ \mathcal{H}_{eac} := \overline{ \{ \psi \in \mathcal{H} \ | \ \exists N \geq 1, \ \mu_{\psi^{\otimes N}} \ll \lambda \} }. $$
This is a closed linear subspace of $\mathcal{H}$.
\end{definition}

\begin{theorem}[RAGE]
Suppose that $\psi \in \mathcal{H}_{eac}$. Then for every $\varphi \in \mathcal{H}$, $$ \langle e^{it H} \psi , \varphi \rangle \underset{t \rightarrow \infty}{\longrightarrow} 0.$$
\end{theorem}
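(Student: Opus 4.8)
The plan is to combine three soft ingredients: the uniform bound $\|e^{itH}\|_{\mathcal H \to \mathcal H}\le 1$, which makes the set of ``good'' vectors closed so that it suffices to treat the dense set appearing in the definition of $\mathcal H_{eac}$; the fact that a finite \emph{positive} measure one of whose convolution powers is absolutely continuous must have Fourier transform vanishing at infinity; and a domination argument passing from the diagonal $\langle e^{itH}\psi,\psi\rangle$ to $\langle e^{itH}\psi,\varphi\rangle$ for arbitrary $\varphi$. For the first point, note that
$$ |\langle e^{itH}\psi,\varphi\rangle - \langle e^{itH}\psi',\varphi\rangle| \le \|\psi-\psi'\|_{\mathcal H}\,\|\varphi\|_{\mathcal H} $$
uniformly in $t$, so $\{\psi\in\mathcal H \ :\ \forall \varphi\in\mathcal H,\ \langle e^{itH}\psi,\varphi\rangle\to 0\}$ is a closed linear subspace of $\mathcal H$; it therefore suffices to prove the conclusion when $\mu_{\psi^{\otimes N}}\ll\lambda$ for some fixed $N\ge 1$.

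For such a $\psi$, I would invoke the identity $\mu_{\psi^{\otimes N}}=\mu_\psi^{*N}$ recalled above, together with $\widehat{\mu_\psi^{*N}}=(\widehat{\mu_\psi})^N$, and apply the Riemann--Lebesgue lemma to the absolutely continuous measure $\mu_{\psi^{\otimes N}}$: this yields $(\widehat{\mu_\psi}(t))^N\to 0$, hence $|\widehat{\mu_\psi}(t)|=|(\widehat{\mu_\psi}(t))^N|^{1/N}\to 0$ as $t\to\infty$. In other words $\mu_\psi$ is a Rajchman measure, even though $\mu_\psi$ itself need not be absolutely continuous.

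It then remains to replace $\psi$ by an arbitrary $\varphi$. Let $\mu_{\psi,\varphi}$ be the complex measure of the absolutely continuous case above, so that $\langle e^{itH}\psi,\varphi\rangle=\widehat{\mu_{\psi,\varphi}}(t)$. Cauchy--Schwarz applied to spectral projectors gives, for every Borel set $E$,
$$ |\mu_{\psi,\varphi}(E)| = |\langle \mathbb 1_E(H)\psi,\mathbb 1_E(H)\varphi\rangle_{\mathcal H}| \le \mu_\psi(E)^{1/2}\,\mu_\varphi(E)^{1/2}, $$
whence $\mu_{\psi,\varphi}\ll\mu_\psi$; write $d\mu_{\psi,\varphi}=h\,d\mu_\psi$ with $h\in L^1(\mu_\psi)$. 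One then wants $\widehat{h\mu_\psi}(t)\to 0$. Since $\sigma(H)$ is compact and $\mu_\psi$ finite, the linear span of the characters $x\mapsto e^{i\xi x}$, $\xi\in\mathbb R$, is dense in $L^1(\mu_\psi)$ by Stone--Weierstrass; for $h=e^{i\xi x}$ one has $\widehat{h\mu_\psi}(t)=\widehat{\mu_\psi}(t+\xi)\to 0$ by the previous paragraph, hence the same for finite linear combinations, and the general case follows from the uniform estimate $|\widehat{h\mu_\psi}(t)-\widehat{h_n\mu_\psi}(t)|\le\|h-h_n\|_{L^1(\mu_\psi)}$. (Equivalently, one may decompose $\varphi$ along the cyclic subspace generated by $\psi$ and run the same density argument in $L^2(\mu_\psi)$.)

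The only genuinely nontrivial step is this last one, namely that the Rajchman property is inherited under absolute continuity; everything else is bookkeeping with the functional calculus and the Riemann--Lebesgue lemma. I expect that step, together with correctly setting up the domination $\mu_{\psi,\varphi}\ll\mu_\psi$, to be where the argument needs care.
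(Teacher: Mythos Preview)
Your argument is correct, but it takes a genuinely different route from the paper. The paper's proof is a one-line reduction to the already-proved absolutely continuous case: it observes that $\langle e^{itH}\psi,\varphi\rangle_{\mathcal H}^{\,N} = \langle e^{it\widehat H}\psi^{\otimes N},\varphi^{\otimes N}\rangle_{\mathcal H^{\otimes N}}$, and since $\mu_{\psi^{\otimes N}}\ll\lambda$ by hypothesis, Theorem~1.1 applied in $\mathcal H^{\otimes N}$ to the pair $(\psi^{\otimes N},\varphi^{\otimes N})$ gives the conclusion immediately. In other words, the tensor-power trick handles both the Fourier decay \emph{and} the passage from $\psi$ to arbitrary $\varphi$ in a single stroke, because Theorem~1.1 already contains the $\mu_{\psi,\varphi}\ll\mu_\psi$ step (with $\mu_\psi\ll\lambda$ there). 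You instead unwind this at the level of measures: first extract that $\mu_\psi$ itself is Rajchman from $(\widehat{\mu_\psi})^N=\widehat{\mu_\psi^{*N}}\to 0$, and then prove the separate (and classical) fact that the Rajchman class is closed under absolute continuity via Stone--Weierstrass. Your approach is longer but more transparent about \emph{why} the result holds---it isolates the Rajchman property of $\mu_\psi$ as a statement of independent interest and never actually needs to work in $\mathcal H^{\otimes N}$---whereas the paper's approach is slicker and avoids the density argument in $L^1(\mu_\psi)$ entirely.
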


\begin{proof}
Let $\psi \in \mathcal{H}$ be such that $\mu_{\psi^{\otimes N}} = \mu_\psi^{* N}$ is absolutely continuous for some large enough $N$. Let $\varphi \in \mathcal{H}$. Then:
$$ \langle e^{it H} \psi , \varphi \rangle_{\mathcal{H}}^N = \langle (e^{it H} \psi)^{\otimes N} , \varphi^{\otimes N} \rangle_{\mathcal{H}^{\otimes N}} = \langle e^{it \widehat{H}} \psi^{\otimes N} , \varphi^{\otimes N} \rangle_{\mathcal{H}^{\otimes N}} \underset{t \rightarrow \infty}{\longrightarrow} 0$$
by the RAGE Theorem, since $\mu_{\psi^{\otimes N}}$ is absolutely continuous. We conclude for any $\psi \in \mathcal{H}_{aec}$ by density.
\end{proof}

\underline{\textbf{Question}}:
What is the space $\mathcal{H}_{eac}^\perp \subset \mathcal{H}$ ?

\section{On the Fibonacci Hamiltonian}

Adapting the idea of the previous proof, we can establish strong phase-averaged delocalization for the Fibonacci Hamiltonian.
Consider the Fibonacci Hamiltonian $H_\omega : \ell^2(\mathbb{Z},\mathbb{C}) \rightarrow \ell^2(\mathbb{Z},\mathbb{C})$, with phase $\omega \in \mathbb{T} = \mathbb{R}/\mathbb{Z}$, for some small enough fixed coupling constant $V>0$. Recall that it is given by the formula:

$$ H_\omega \psi(n) = \psi(n+1)+\psi(n-1)+V \mathbb{1}_{[1-\alpha,1)}(\omega+n \alpha \text{ mod } 1) \psi(n) $$
where $\alpha := (\sqrt{5}-1)/2$ is the inverse of the golden ratio.
These are (uniformly) bounded and selfadjoint linear operators. Notice that, denoting $S \psi(n) = \psi(n+1)$ the shift (unitary in $\ell^2(\mathbb{Z})$), we have $S^n H_\omega (S^*)^n = H_{\omega+\alpha n}$. In general, all the operators $H_\omega$ have the same spectrum $\Sigma_V \subset \mathbb{R}$, independantly on the choice of the phase. Define $\mu_\omega \in \mathcal{P}(\Sigma_V)$ to be the spectral measure associated with $\delta_0 \in \ell^2(\mathbb{Z},\mathbb{C})$ and $H_\omega$. Define also the density of state measure $\mu \in \mathcal{P}(\Sigma_V)$ by the formula $$\mu = \int_{\omega \in \mathbb{T}} \mu_\omega d\omega.$$ It was proved in \cite{Le25} that
$$ |\widehat{\mu}(\xi)| \leq |\xi|^{-\varepsilon} ,$$
for some $\varepsilon>0$ and for large enough $\xi>0$. In other words, we have
$$ \Big| \int_{\omega \in \mathbb{T}} \langle e^{it H_\omega} \delta_0,\delta_0\rangle_{\ell^2(\mathbb{Z},\mathbb{C})} d\omega \Big| \leq |t|^{-\varepsilon}$$
for large enough $t$. It follows that $\mu^{*N} $ is absolutely continuous for $N$ large enough, since its Fourier transform will be e.g. in $L^2$. Adapting the previous proof, we will show the following consequence:
\begin{theorem}[Escape of mass]
For any $\ell^1$-localized state $\psi \in \ell^1(\mathbb{Z},\mathbb{C})$, and for any $\varphi \in \ell^2(\mathbb{Z},\mathbb{C})$, we have:
$$ \int_{\omega \in \mathbb{T}} \langle e^{itH_\omega} \psi, \varphi \rangle_{\ell^2(\mathbb{Z})} d\omega \underset{t \rightarrow \infty}{\longrightarrow} 0 .$$
\end{theorem}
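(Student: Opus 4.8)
The plan is to realize the phase average as a genuine Schrödinger evolution on a larger Hilbert space, and then to quote the eventually-absolutely-continuous form of the RAGE theorem proved above. Let $\mathcal{K} := L^2\big(\mathbb{T}, \ell^2(\mathbb{Z},\mathbb{C})\big)$ be the space of square-integrable sections $F : \omega \mapsto F(\omega) \in \ell^2(\mathbb{Z},\mathbb{C})$, and let $\mathbf{H} := \int_{\mathbb{T}}^{\oplus} H_\omega\, d\omega$ be the associated decomposable operator, $(\mathbf{H}F)(\omega) := H_\omega F(\omega)$. Because the $H_\omega$ are uniformly bounded and selfadjoint, $\mathbf{H}$ is a bounded selfadjoint operator with $e^{it\mathbf{H}} = \int_{\mathbb{T}}^{\oplus} e^{itH_\omega}\, d\omega$, so the whole of Section 1 applies to the pair $(\mathcal{K},\mathbf{H})$. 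For $\eta \in \ell^2(\mathbb{Z})$ denote by $\underline{\eta} \in \mathcal{K}$ the constant section $\omega \mapsto \eta$; then for all $\psi,\varphi \in \ell^2(\mathbb{Z})$,
$$ \int_{\omega \in \mathbb{T}} \langle e^{itH_\omega}\psi, \varphi\rangle_{\ell^2(\mathbb{Z})}\, d\omega \;=\; \langle e^{it\mathbf{H}}\, \underline{\psi}, \underline{\varphi}\rangle_{\mathcal{K}}. $$

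Next I would compute the spectral measure of $\underline{\delta_k}$ with respect to $\mathbf{H}$, for each $k \in \mathbb{Z}$. Its Fourier transform is $\int_{\mathbb{T}} \langle e^{itH_\omega}\delta_k,\delta_k\rangle\, d\omega$; using the covariance relation $S^k H_\omega S^{-k} = H_{\omega + k\alpha}$ together with $S^k\delta_k = \delta_0$ and the translation invariance of Haar measure on $\mathbb{T}$, this equals $\int_{\mathbb{T}} \langle e^{itH_\omega}\delta_0,\delta_0\rangle\, d\omega = \widehat{\mu}(t)$. Hence the spectral measure of $\underline{\delta_k}$ is exactly the density of states $\mu$, for every $k$. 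By the estimate of \cite{Le25} recalled above, $\mu^{*N}$ is absolutely continuous for $N$ large enough, i.e. $\mu_{(\underline{\delta_k})^{\otimes N}} = \mu^{*N} \ll \lambda$; so $\underline{\delta_k}$ lies in the eventually absolutely continuous subspace of $\mathcal{K}$. The RAGE theorem in its eventually-absolutely-continuous form now gives, for every $\varphi \in \ell^2(\mathbb{Z})$,
$$ \int_{\omega \in \mathbb{T}} \langle e^{itH_\omega}\delta_k, \varphi\rangle\, d\omega = \langle e^{it\mathbf{H}}\,\underline{\delta_k}, \underline{\varphi}\rangle_{\mathcal{K}} \underset{t \to \infty}{\longrightarrow} 0 . $$

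It remains to upgrade this from $\delta_k$ to a general $\ell^1$ state. Writing $\psi = \sum_{k\in\mathbb{Z}} \psi(k)\, \delta_k$ with $\sum_k |\psi(k)| < \infty$, one has by linearity
$$ \int_{\omega \in \mathbb{T}} \langle e^{itH_\omega}\psi, \varphi\rangle\, d\omega = \sum_{k \in \mathbb{Z}} \psi(k) \int_{\omega \in \mathbb{T}} \langle e^{itH_\omega}\delta_k, \varphi\rangle\, d\omega . $$
Each summand tends to $0$ as $t \to \infty$ by the previous step, and it is bounded, uniformly in $t$ and $k$, by $|\psi(k)|\,\|\varphi\|_{\ell^2}$ (since $\|e^{itH_\omega}\delta_k\|=1$ and $\int_{\mathbb{T}} d\omega = 1$); as $\sum_k |\psi(k)|\,\|\varphi\|_{\ell^2} < \infty$, dominated convergence over $\mathbb{Z}$ permits exchanging the sum and the limit, which yields the claim.

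The point worth noting is that there is essentially no analytic obstacle left: the only quantitative ingredient, the power decay $|\widehat{\mu}(\xi)| \lesssim |\xi|^{-\varepsilon}$, is the theorem of \cite{Le25} quoted above, and everything else is formal bookkeeping. The two places that do require a small check are, first, that the decomposable operator $\mathbf{H}$ genuinely satisfies the hypotheses under which Section 1 was written (boundedness and selfadjointness of a direct integral of uniformly bounded selfadjoint fibres, with fibrewise functional calculus), and second, that the hypothesis $\psi \in \ell^1(\mathbb{Z})$ — rather than merely $\psi \in \ell^2(\mathbb{Z})$ — is precisely what makes the final interchange of summation and limit legitimate.
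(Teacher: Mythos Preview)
Your proof is correct and takes a cleaner, more structural route than the paper's. The paper never introduces the direct integral $\mathcal{K}=L^2(\mathbb{T},\ell^2(\mathbb{Z}))$; instead it works on $\ell^2(\mathbb{Z}^N)$ with the family $H_{\vec\omega}$, integrates over $\mathbb{T}^N$, and essentially re-derives the proof of Theorem~1.2 in situ (constructing the complex measure $\mu_\phi$, checking $\mu_\phi(E)=0$ whenever $|E|=0$ via $\mu^{*N}\ll\lambda$, and invoking Riemann--Lebesgue). Your device of recognising $\int_\mathbb{T}\langle e^{itH_\omega}\psi,\varphi\rangle\,d\omega$ as an honest matrix coefficient $\langle e^{it\mathbf H}\underline\psi,\underline\varphi\rangle_{\mathcal K}$, together with the observation that the spectral measure of each $\underline{\delta_k}$ is exactly the density of states $\mu$, lets you quote Theorem~1.2 as a black box --- which is precisely what that theorem was designed for. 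In the final reduction the paper also differs slightly: rather than summing over $k$ and using dominated convergence, it uses the covariance relation to collapse everything to a single pairing $\int_\mathbb{T}\langle e^{itH_\omega}\delta_0,\sum_k\overline{\psi(k)}S^k\varphi\rangle\,d\omega$, which visibly needs $\psi\in\ell^1$ for the second argument to land in $\ell^2$. Your packaging in fact buys a little more: since $\mathcal K_{eac}$ is a closed subspace and each $\underline{\delta_k}$ lies in it, so does $\underline\psi=\sum_k\psi(k)\underline{\delta_k}$ for every $\psi\in\ell^2(\mathbb{Z})$, and Theorem~1.2 then yields the conclusion without the $\ell^1$ hypothesis or the dominated-convergence step.
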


\begin{proof}
Notice that, since $$\mu = \int_\omega \mu_\omega d\omega,$$ we have $$\mu^{*N} = \int_{\vec{\omega} \in \mathbb{T}^N} \mu_{\vec{\omega}} d\vec{\omega} ,$$
where $\mu_{\vec{\omega}} := \mu_{\omega_1} * \dots * \mu_{\omega_N}$. Define the naturally associated Hamiltonian $H_{\vec{\omega}} : \ell^2(\mathbb{Z}^N,\mathbb{C}) \longrightarrow \ell^2(\mathbb{Z}^N,\mathbb{C}) $ to be
$$ H_{\vec{\omega}} = (H_{\omega_1} \otimes I \otimes \dots \otimes I) + \dots + (I \otimes \dots \otimes I \otimes H_{\omega_N}) .$$
Notice then that, for any Borel set $E\subset \mathbb{R}$ with zero lenght $|E|=0$, we have:
$$ \int_{\vec{\omega} \in \mathbb{T}^N} \| \mathbb{1}_{E}(H_{\vec{\omega}}) \delta_0^{\otimes N} \|_{\ell^2(\mathbb{Z}^N)} d\vec{\omega} = 0.  $$
Indeed, by Cauchy-Schwarz, by definition of the spectral measures, and by absolute continuity of $\mu^{*N}$:
$$  \Big(\int_{\vec{\omega} \in \mathbb{T}^N} \| \mathbb{1}_{E}(H_{\vec{\omega}}) \delta_0^{\otimes N} \|_{\ell^2(\mathbb{Z}^N)} d\vec{\omega}\Big)^2 \leq \int_{\vec{\omega} \in \mathbb{T}^N} \| \mathbb{1}_{E}(H_{\vec{\omega}}) \delta_0^{\otimes N} \|_{\ell^2(\mathbb{Z}^N)}^2 d\vec{\omega} $$
$$ = \int_{\vec{\omega} \in \mathbb{T}^N} \langle \mathbb{1}_{E}^2(H_{\vec{\omega}}) \delta_0^{\otimes N} ,\delta_0^{\otimes N} \rangle_{\ell^2(\mathbb{Z}^N)} d\vec{\omega} $$
$$ = \int_{\vec{\omega} \in \mathbb{T}^N} \int_{\Sigma_V} \mathbb{1}_E(t) d\mu_{\vec{\omega}}(t) d\vec{\omega} = \mu^{*N}(E)=0. $$
Be carefull that this doesn't imply that any of the measures $\mu_\omega$ are absolutely continuous, and that they could very well all have atoms.
Let us now fix any $\phi \in \ell^2(\mathbb{Z}^N,\mathbb{C})$, and consider the linear form:

$$ f \in C^0(\Sigma_V,\mathbb{C}) \longmapsto \int_{\vec{\omega} \in \mathbb{T}^N} \langle f(H_{\vec{\omega}}) \delta_0^{\otimes N} ,\phi \rangle_{\ell^2(\mathbb{Z}^N)} d\vec{\omega} \in \mathbb{C}.$$

This linear form is continuous: by Riesz representation theorem, there exists a complex measure $\mu_\phi$ supported on $\Sigma_V$ such that 
$$ \int_{\vec{\omega} \in \mathbb{T}^N} \langle f(H_{\vec{\omega}}) \delta_0^{\otimes N} ,\phi \rangle_{\ell^2(\mathbb{Z}^N)} d\vec{\omega} = \int_{\Sigma_V} f d\mu_\phi. $$
But notice that, for any Borel set $E\subset \mathbb{R}$ of zero lenght $|E|=0$, we have
$$ |\mu_\varphi(E)| = \Big| \int_{\vec{\omega} \in \mathbb{T}^N} \langle \mathbb{1}_E(H_{\vec{\omega}}) \delta_0^{\otimes N},\phi \rangle_{\ell^2(\mathbb{Z}^N)} d\vec{\omega} \Big| \leq \int_{\vec{\omega} \in \mathbb{T}^N} \| \mathbb{1}_E(H_{\vec{\omega}}) \delta_0^{\otimes N} \|_{\ell^2(\mathbb{Z}^N)} d\vec{\omega} \cdot \|\phi \|_{\ell^2(\mathbb{Z}^N)}  = 0. $$
By the Radon-Nikodym theorem for complex measures, it follows that $\mu_\phi$ is absolutely continuous. In particular, we have $ \widehat{\mu_\phi}(t) \rightarrow 0 $, which means that, for any $\phi \in \ell^2(\mathbb{Z}^d)$, we have:
$$ \int_{\vec{\omega}} \langle e^{it H_{\vec{\omega}}} \delta_0^{\otimes N}, \phi \rangle_{\ell^2(\mathbb{Z}^N)} d\vec{\omega} \underset{t \rightarrow \infty}{\longrightarrow} 0 . $$
In particular, if $\phi$ is of the form $\phi=\varphi^{\otimes N}$, we find
$$ \Big(\int_{\omega \in \mathbb{T}} \langle e^{it H_{\omega}} \delta_0, \varphi \rangle_{\ell^2(\mathbb{Z})} d{\omega} \Big)^N = \int_{\vec{\omega} \in \mathbb{T}^N} \langle e^{it H_{\vec{\omega}}} \delta_0^{\otimes N}, \varphi^{\otimes N} \rangle_{\ell^2(\mathbb{Z}^N)} d\vec{\omega} \underset{t \rightarrow \infty}{\longrightarrow} 0. $$
Let us now conclude the proof. Take any $\psi \in \ell^1(\mathbb{Z}) \subset \ell^2(\mathbb{Z})$, and any $\varphi \in \ell^2(\mathbb{Z})$. We can write $\psi$ as a sum (absolutely converging for $\|\cdot\|_{\ell^2(\mathbb{Z})}$):
$$ \psi = \sum_{k \in \mathbb{Z}} \psi(k) S^{-k} (\delta_0), $$
where $S$ is the shift. Hence, doing a change of variable in the integrals, using the relation $S H_\omega S^{-1} = H_{\omega+\alpha}$, and noticing that $\sum_{k \in \mathbb{Z}} \overline{\psi(k)} S^k \varphi \in \ell^2(\mathbb{Z})$:
$$ \int_{\omega \in \mathbb{T}} \langle e^{it H_\omega} \psi,\varphi \rangle d\omega = \sum_{k \in \mathbb{Z}} \psi(k) \int_{\omega \in \mathbb{T}} \langle e^{it H_\omega} S^{-k} \delta_0, \varphi \rangle_{\ell^2(\mathbb{Z})} d\omega  $$
$$ =  \sum_{k \in \mathbb{Z}} \psi(k) \int_{\omega \in \mathbb{T}} \langle S^{-k} e^{it H_{\omega+k\alpha}}  \delta_0, \varphi \rangle_{\ell^2(\mathbb{Z})} d\omega $$
$$ =  \sum_{k \in \mathbb{Z}} \psi(k) \int_{\omega \in \mathbb{T}} \langle  e^{it H_{\omega}}  \delta_0, S^k \varphi \rangle_{\ell^2(\mathbb{Z})} d\omega  $$ $$ = \int_{\omega \in \mathbb{T}} \Big\langle e^{itH_\omega} \delta_0, \sum_{k \in \mathbb{Z}} \overline{\psi(k)} S^k \varphi \Big\rangle_{\ell^2(\mathbb{Z})} d\omega \underset{t \rightarrow \infty}{\longrightarrow 0} .$$
\end{proof}

\section*{Acknowledgments}

We thank Tuomas Sahlsten, Frederic Naud, Mostafa Sabri and Jake Fillman for discussions during the redaction of this note. It was mentionned to me that a \say{dual} work, in the sense that we restrict the absolutely continuous spectrum instead of extending it, can be found in the work of Avron and Simon \cite{AS81}.

\end{document}